\newcommand{\coimplies}{\Yleft}
\newcommand{\CPL}{\mathsf{CPL}}
\newcommand{\BD}{\mathsf{BD}}
\newcommand{\BDtriangle}{\mathsf{BD}\triangle}
\newcommand{\LBD}{\mathscr{L}_\mathsf{BD}}
\newcommand{\LBDtriangle}{\mathscr{L}_{\mathsf{BD}\triangle}}
\newcommand{\Prop}{\mathtt{Prop}}
\newcommand{\Be}{\mathtt{B}}
\newcommand{\true}{\mathbf{T}}
\newcommand{\both}{\mathbf{B}}
\newcommand{\neither}{\mathbf{N}}
\newcommand{\false}{\mathbf{F}}
\newcommand{\biG}{\mathsf{biG}}
\newcommand{\triangleMCB}{\mathsf{MCB}_\triangle}
\newcommand{\LtriangleMCB}{\mathscr{L}_{\triangleMCB}}
\newcommand{\QGBDtriangle}{\mathsf{QG}_{\BDtriangle}}
\newcommand{\QG}{\mathsf{QG}}
\newcommand{\LQGBDtriangle}{\mathscr{L}_{\QGBDtriangle}}
\newcommand{\LbiG}{\mathscr{L}_{\biG}}
\newcommand{\four}{\mathbf{4}}
\newtheorem{theorem}{Theorem}
\newtheorem{proposition}{Proposition}
\theoremstyle{definition}
\newtheorem{definition}{Definition}
\theoremstyle{remark}
\newtheorem{remark}{Remark}
\newtheorem{example}{Running example, part}
\newtheorem{convention}{Convention}
\title{Presumptive Reasoning in a Paraconsistent Setting}
\author{Sabine Frittella\qquad\qquad Daniil Kozhemiachenko
\institute{INSA Centre Val de Loire, Univ.\ Orl\'{e}ans, LIFO EA 4022, France\thanks{The research of Sabine Frittella and Daniil Kozhemiachenko was funded by the grant ANR JCJC 2019, project PRELAP (ANR-19-CE48-0006).}}
\email{\{sabine.frittella,daniil.kozhemiachenko\}@insa-cvl.fr}
\and
Bart Verheij
\institute{Bernoulli Institute, Rijksuniversiteit Groningen\\Groningen, the Netherlands\thanks{Bart Verheij participates in the Hybrid Intelligence Center, a 10-year programme funded by the Dutch Ministry of Education, Culture and Science through the Netherlands Organisation for Scientific Research, https://hybrid-intelligence-centre.nl.}}
\email{bart.verheij@rug.nl}
}
\begin{document}
\maketitle
\begin{abstract}
We explore presumptive reasoning in the paraconsistent case. Specifically, we provide semantics for non-trivial reasoning with presumptive arguments with contradictory assumptions or conclusions. We adapt the case models proposed by Verheij~\cite{Verheij2016,Verheij2017} and define the paraconsistent analogues of the three types of validity defined therein: coherent, presumptively valid, and conclusive ones. To formalise the reasoning, we define case models that use $\BDtriangle$, an expansion of the Belnap--Dunn logic with the Baaz Delta operator. We also show how to recover presumptive reasoning in the original, classical context from our paraconsistent version of case models. Finally, we construct a~two-layered logic over $\BDtriangle$ and $\biG$ (an expansion of G\"{o}del logic with a coimplication $\Yleft$ or $\triangle$) and obtain a faithful translation of presumptive arguments into formulas.
\end{abstract}
\section{Introduction\label{sec:introduction}}
When arguing for a given statement, it can happen that a person uses contradictory assumptions. From the classical standpoint, every statement trivially follows from a contradiction. This, however, is counter-intuitive as an agent may not be willing to accept a completely arbitrary statement just because their premises contain a contradiction.

In general, an argument from $\phi$ to $\chi$ (written formally as $\langle\phi,\chi\rangle$) can be either \emph{deductive} (when $\phi$~\emph{entails} $\chi$) or \emph{presumptive} (otherwise). I.e., to verify the correctness of a deductive argument, it suffices to utilise purely logical means while establishing the correctness (acceptability) of a presumptive one requires an extra-logical framework. Thus, from \emph{the classical standpoint}, every argument from a contradictory premise is \emph{deductive}. Hence, if one wants to formalise \emph{non-trivial} presumptive reasoning from contradictory premises, one has to use a \emph{paraconsistent} logic, i.e., a logic where the explosion principle $p,\neg p\models q$ is not valid.

\paragraph{Dung's argumentative semantics vs case models}
An influential approach to the formalisation of argumentation focuses on argument attack~\cite{Dung1995}. The main idea is to represent the argumentative framework as a directed graph where $\mathcal{A}\rightarrow\mathcal{B}$ is interpreted as ‘argument $\mathcal{A}$ attacks $\mathcal{B}$’ (here, arguments are treated as unified statements, and premises and conclusions are not singled out). Then, $\mathcal{A}$ is \emph{acceptable} if it responds to every attack (or, formally, if $\mathcal{A}\rightarrow\mathcal{B}$ for every $\mathcal{B}$ s.t.\ $\mathcal{B}\rightarrow\mathcal{A}$). An argument's correctness depends on the argumentation semantics choice. 

However, the connection of Dung's approach to standard logical semantics may not be straightforward. In addition, the support of arguments is abstracted. Both issues have been addressed in several ways (cf., e.g.~\cite{BondarenkoDungKowalskiToni1997,BesnardHunter2001,GarciaSimari2004,Prakken2010}). One such alternative to Dung's approach was proposed in~\cite{Verheij2016} and further developed in~\cite{Verheij2017}.
In these works, the interpretation of presumptive arguments was given by means of \emph{case models}: sets of classically incompatible satisfiable propositional formulas called ‘cases’ (whence the name) with a~preference relation defined thereon. An argument in this framework has the following form: $\mathcal{A}=\langle\phi,\chi\rangle$. Here, $\phi$ is the premise, $\chi$ is the conclusion, and, in addition, $\mathcal{A}$ presents a~\emph{case}~--- $\phi\wedge\chi$. Three kinds of acceptable arguments were studied: coherent (both the premise and the conclusion are supported by at least one case), presumptively valid (both the premise and conclusion are supported by the most preferred case), and conclusive (the conclusion is supported by all cases that support the premise). Furthermore, a~representation of case models in terms of sample spaces with probability measures was devised and a correspondence between different arguments and probabilities of the corresponding events was provided. An important distinction between case models and Dung's semantics is that the validities in the former are defined via the entailment in the classical logic. Thus, one can produce a non-classical counterpart to case models by changing the underlying entailment relation.

\paragraph{Non-trivial contradictory arguments}
In both approaches discussed above, it is assumed that the acceptable arguments are not \emph{self-contradictory}. Namely, if $\phi$ is an argument in Dung's framework, it should be classically satisfiable, and if $\langle\chi,\psi\rangle$ is an argument over a case model, then both $\chi$ and $\psi$ must be classically satisfiable. This restriction is easy to explain in Dung's approach: indeed, we can claim that a contradictory argument attacks itself. In the case model setting, however, it makes sense to consider contradictory arguments and cases under the following interpretation.

Every ‘case’ in the model can be thought of as a source that gives some information regarding a given set of statements. Accordingly, the preference relation on cases shows which sources are trusted more or less. In this interpretation, it is clear that even if a source is trusted, it can provide a contradictory response to a question (e.g., a police officer testifying in court can first claim that they were unarmed while on patrol and then say ‘when I saw the suspect, I immediately drew my pistol out of the holster’) or fail to provide any information at all.

Let us now introduce the running example to illustrate the contexts that we aim to formalise.
\begin{example}[Witnessing a robbery]\label{example:robbery}
An investigator reads a report by a police officer who questioned several witnesses on a bank robbery. The relevant information is whether the perpetrator had a limp ($l$), whether they had a big bag for the robbed valuables ($b$), and whether they used the lift or the staircase~($s$) to leave the office. The report contains the following testimonies.
\begin{itemize}
\item $\mathsf{c}_1$ tells that the perpetrator indeed had a limp but cannot say anything about how they left the office; moreover, $\mathsf{c}_1$ tells that the robber put all the loot in the pockets.
\item $\mathsf{c}_2$ tells nothing about whether the perpetrator was limping and mentions that the perpetrator had a big shoulder bag; unfortunately, $\mathsf{c}_2$ is confused: they claim that they saw the robber using the lift but are also saying that ‘the lift has been out of order for half a year’.
\item $\mathsf{c}_3$ testifies that the robber had a limp but walked down the stairs; $\mathsf{c}_3$'s account is also contradictory: they describe the bag as ‘huge’ but then say that the robber put it into the pocket.
\end{itemize}
\end{example}
\noindent All witnesses gave non-classical (incomplete or contradictory) responses, whence we cannot straightforwardly represent them in the case models, nor in Dung's framework. An investigator, however, needs to draw conclusions from the accounts at hand. E.g., they might want to know how the perpetrator in fact left the building and for that, they need to know whether the perpetrator had a limp.

\paragraph{Plan of the paper}
In this paper, we adapt the case models presented in~\cite{Verheij2016,Verheij2017} to the presumptive reasoning with possibly contradictory statements. To this end, we will use $\BDtriangle$ --- the expansion of the Belnap--Dunn logic~\cite{AndersonBelnap1962,Dunn1976,Belnap2019} with a Baaz $\triangle$ operator (cf.~\cite{Baaz1996} for the original presentation of $\triangle$ in the context of fuzzy logics) originating from~\cite{SanoOmori2014}. We define the analogues of coherent, presumptively valid, and conclusive arguments and show their relations to one another. Finally, we are going to provide a logical representation of all these arguments.

The remainder of the paper is structured as follows. In Section~\ref{sec:preliminaries}, we present the syntax and semantics of $\BDtriangle$. Then, in Section~\ref{sec:casemodels}, we develop the $\BDtriangle$ case models. In Section~\ref{sec:logic}, we present a logic that formalises reasoning in $\BDtriangle$ models and construct a faithful translation of arguments to formulas. Finally, in Section~\ref{sec:conclusion}, we summarise the results and provide a plan for future research.
\section{Logical preliminaries\label{sec:preliminaries}}
The language of $\LBDtriangle$ and its $\triangle$-free fragment $\LBD$ are defined via the following grammar ($\Prop$ is a~fixed countable set of propositional variables).
\begin{align*}
\LBDtriangle\ni\phi&\coloneqq p\in\Prop\mid\neg\phi\mid(\phi\wedge\phi)\mid(\phi\vee\phi)\mid\triangle\phi
\end{align*}

There are several semantics for $\BD$ and its expansions (cf.~\cite{OmoriWansing2017} for the examples). One of the simplest is a truth-table semantics from~\cite{Belnap2019}. There, a formula can have one of the following four values corresponding to the available information regarding a statement $\phi$. $\true$ stands for ‘there is only information in support of $\phi$’; $\false$ for ‘there is only information denying $\phi$’; $\neither$ for ‘there is information neither in support nor in denial of $\phi$’; $\both$ for ‘there is information both in support and denial of $\phi$’. We also use frame semantics (cf., e.g.,~\cite{BilkovaFrittellaMajerNazari2020,KleinMajerRad2021,BilkovaFrittellaKozhemiachenkoMajerNazari2022arxiv}) as it is more convenient for the logical representation of case models.
\begin{definition}[Truth-table semantics of $\BDtriangle$]\label{def:truthtablesBDtriangle}
A $\mathbf{4}$-valuation is a map $v_\mathbf{4}:\Prop\rightarrow\{\mathbf{T},\mathbf{B},\mathbf{N},\mathbf{F}\}$ that is extended to complex formulas using the following definitions.
\scriptsize{\begin{center}
\begin{tabular}{cccc}
\begin{tabular}{c|c}
$\neg$&\\\hline
$\mathbf{T}$&$\mathbf{F}$\\
$\mathbf{B}$&$\mathbf{B}$\\
$\mathbf{N}$&$\mathbf{N}$\\
$\mathbf{F}$&$\mathbf{T}$                    
\end{tabular}
&
\begin{tabular}{c|c}
$\triangle$&\\\hline
$\mathbf{T}$&$\mathbf{T}$\\
$\mathbf{B}$&$\mathbf{T}$\\
$\mathbf{N}$&$\mathbf{F}$\\
$\mathbf{F}$&$\mathbf{F}$                
\end{tabular}
&
\begin{tabular}{c|cccc}
$\wedge$ & $\mathbf{T}$ & $\mathbf{B}$ & $\mathbf{N}$ & $\mathbf{F}$ \\\hline
$\mathbf{T}$ & $\mathbf{T}$ & $\mathbf{B}$ & $\mathbf{N}$ & $\mathbf{F}$ \\
$\mathbf{B}$ & $\mathbf{B}$ & $\mathbf{B}$ & $\mathbf{F}$ & $\mathbf{F}$ \\
$\mathbf{N}$ & $\mathbf{N}$ & $\mathbf{F}$ & $\mathbf{N}$ & $\mathbf{F}$ \\
$\mathbf{F}$ & $\mathbf{F}$ & $\mathbf{F}$ & $\mathbf{F}$ & $\mathbf{F}$
\end{tabular}
&
\begin{tabular}{c|cccc}
$\vee$ & $\mathbf{T}$ & $\mathbf{B}$ & $\mathbf{N}$ & $\mathbf{F}$ \\\hline
$\mathbf{T}$ & $\mathbf{T}$ & $\mathbf{T}$ & $\mathbf{T}$ & $\mathbf{T}$ \\
$\mathbf{B}$ & $\mathbf{T}$ & $\mathbf{B}$ & $\mathbf{T}$ & $\mathbf{B}$ \\
$\mathbf{N}$ & $\mathbf{T}$ & $\mathbf{T}$ & $\mathbf{N}$ & $\mathbf{N}$ \\
$\mathbf{F}$ & $\mathbf{T}$ & $\mathbf{B}$ & $\mathbf{N}$ & $\mathbf{F}$
\end{tabular}
\end{tabular}
\end{center}}
\end{definition}
\begin{definition}[Frame semantics for $\BDtriangle$]\label{def:BDframesemantics}
Let $\phi,\phi'\in\LBDtriangle$. For a~model $\mathfrak{M}=\langle W,v^+,v^-\rangle$ with $v^+,v^-:\Prop\rightarrow2^W$, we define notions of $w\vDash^+\phi$ and $w\vDash^-\phi$ for $w\in W$ as follows.
\begin{center}
\begin{tabular}{rclrcl}
$w\vDash^+p$& iff&$w\in v^+(p)$&$w\vDash^-p$& iff &$w\in v^-(p)$\\
$w\vDash^+\neg\phi$& iff &$w\vDash^-\phi$&$w\vDash^-\neg\phi$& iff &$w\vDash^+\phi$\\
$w\vDash^+\phi\wedge\phi'$& iff &$w\vDash^+\phi$ and $w\vDash^+\phi'$&$w\vDash^-\phi\wedge\phi'$& iff &w$\vDash^-\phi$ or $w\vDash^-\phi'$\\
$w\vDash^+\phi\vee\phi'$& iff &$w\vDash^+\phi$ or $w\vDash^+\phi'$&$w\vDash^-\phi\vee\phi'$& iff &$w\vDash^-\phi$ and $w\vDash^-\phi'$\\
$w\vDash^+\triangle\phi$& iff &$w\vDash^+\phi$&$w\vDash^-\triangle\phi$& iff &$w\nvDash^+\phi$
\end{tabular}
\end{center}
We define the \emph{positive} and \emph{negative interpretations of $\phi$} as follows: $|\phi|^+=\{w\in W\mid w\vDash^+\phi\}$; $|\phi|^-=\{w\in W\mid w\vDash^-\phi\}$.

We say that a~sequent $\phi\vdash\chi$ is \emph{satisfied on $\mathfrak{M}$} (denoted, $\mathfrak{M}\models[\phi\vdash\chi]$) iff $|\phi|^+\subseteq|\chi|^+$ and $|\chi|^-\subseteq|\phi|^-$. $\phi\vdash\chi$ is \emph{valid} iff it is satisfied on every model. In this case, we say that $\phi$ \emph{entails} $\chi$ and write $\phi\models_{\BDtriangle}\chi$.
\end{definition}

Let us make several quick observations regarding $\BDtriangle$. First, the semantical conditions of $\neg$, $\wedge$, and $\vee$ coincide with those from the classical logic. On the other hand, it is more intuitive to interpret $w\vdash^+\phi$ as ‘$w$ gives evidence for (confirms) $\phi$’ and $w\vdash^-\phi$ as ‘$w$ gives evidence against (denies) $\phi$’. Thus, $w$ confirms $\phi\wedge\phi'$ when both conjuncts are confirmed by $w$ and $w$ denies $\phi\wedge\phi'$ when at least one conjunct is denied.

The difference is that in $\BDtriangle$ the truth and falsity of a~formula \emph{are independent}. Thus, in contrast to the classical logic, neither $p\wedge\neg p\vdash q$ nor $p\vdash q\vee\neg q$ is valid. Second, the addition of $\triangle$ (read ‘it is true that’) to $\BD$ makes it weakly functionally complete (cf.~\cite{OmoriSano2015} for further details). This allows us to represent every testimony a source can give regarding $\phi$ (i.e., confirm $\phi$, contradict itself regarding $\phi$, say nothing about $\phi$ or deny $\phi$) as follows:
\begin{align*}
\mathbf{t}(\phi)&\coloneqq\triangle\phi\wedge\neg\triangle\neg\phi&\mathbf{b}(\phi)&\coloneqq\triangle\phi\wedge\triangle\neg\phi&
\mathbf{n}(\phi)&\coloneqq\neg\triangle\phi\wedge\neg\triangle\neg\phi&\mathbf{f}(\phi)&\coloneqq\neg\triangle\phi\wedge\triangle\neg\phi
\end{align*}
Note that $v_\mathbf{4}(\mathbf{x}(\phi))\!=\!\true$ if $v_\mathbf{4}(\phi)\!=\!\mathbf{X}$; and $v_\mathbf{4}(\mathbf{x}(\phi))\!=\!\false$ otherwise (with $\mathbf{x}\!\in\!\{\mathbf{t},\mathbf{b},\mathbf{n},\mathbf{f}\}$ and $\mathbf{X}\!\in\!\{\true,\both,\neither,\false\}$). Furthermore, it is possible to define $\bot$ and $\top$ s.t.\ $|\top|^+=W$, $|\top|^-=\varnothing$, $|\bot|^+=\varnothing$, $|\bot|^-=W$ as follows: $\top\coloneqq\triangle p\vee\neg\triangle p$; $\bot\coloneqq\neg\top$. $\triangle$ also allows for the internalisation of entailment: for $\mathbf{x},\mathbf{x}'\in\{\mathbf{t},\mathbf{b},\mathbf{n},\mathbf{f}\}$, the formula below is valid iff $\phi\models_{\BDtriangle}\chi$.
\begin{align*}
\phi\Rrightarrow\chi&\coloneqq\bigvee\limits_{\scriptsize{\begin{matrix}\mathbf{x}\leq_\four\mathbf{x}'\end{matrix}}}\!\!\!\!(\mathbf{x}(\phi)\wedge\mathbf{x}'(\chi))\tag{$\mathbf{f}\leq_\four\mathbf{b},\mathbf{n}\leq_\four\mathbf{t}$; $\mathbf{b}\not\leq_\four\mathbf{n}$; $\mathbf{n}\not\leq_\four\mathbf{b}$}
\end{align*}
The following property will be useful in showing how classical case models can be simulated in $\BDtriangle$.
\begin{proposition}\label{prop:BDtriangleclassicality}
Let $\phi\in\LBDtriangle$ be s.t.\ every occurrence of every variable $p$ in $\phi$ is in the scope of $\triangle$. Then for every $\BDtriangle$ model $\mathfrak{M}$ and $w\in\mathfrak{M}$, exactly one of the following holds: $w\vDash^+\phi$ and $w\nvDash^-\phi$, or $w\nvDash^+\phi$ and $w\vDash^-\phi$.
\end{proposition}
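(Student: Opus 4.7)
The plan is to prove the claim by structural induction on $\phi$, abbreviating the desired dichotomy as $\mathsf{Cl}(\phi)$: at every $w$ in every $\mathfrak{M}$, exactly one of $(w\vDash^+\phi,\,w\nvDash^-\phi)$ and $(w\nvDash^+\phi,\,w\vDash^-\phi)$ holds. The key observation driving the setup is that the hypothesis "every variable occurrence in $\phi$ lies in the scope of $\triangle$" rules out bare propositional variables, so the appropriate base case is not atoms but formulas of the form $\triangle\psi$ for an \emph{arbitrary} $\psi\in\LBDtriangle$. The clauses $w\vDash^+\triangle\psi\iff w\vDash^+\psi$ and $w\vDash^-\triangle\psi\iff w\nvDash^+\psi$ from Definition \ref{def:BDframesemantics} immediately yield $\mathsf{Cl}(\triangle\psi)$, without any assumption on $\psi$: the two alternatives simply correspond to $w\vDash^+\psi$ and $w\nvDash^+\psi$, which are complementary.

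For the inductive step, I would first note that the scope hypothesis is preserved under $\neg$, $\wedge$, $\vee$, so the inductive hypothesis applies to every proper subformula of the relevant shape. For $\phi=\neg\psi$, the swap clauses $w\vDash^+\neg\psi\iff w\vDash^-\psi$ and $w\vDash^-\neg\psi\iff w\vDash^+\psi$ transfer the dichotomy from $\psi$ to $\neg\psi$ verbatim. For $\phi=\psi_1\wedge\psi_2$ (and dually for $\vee$), the IH places each $\psi_i$ at $w$ in one of two ``classical'' states (call them $\mathsf{T}$ and $\mathsf{F}$); a four-case analysis using the Belnap clauses for $\wedge$ shows that $\phi$ lands in $\mathsf{T}$ exactly when both $\psi_i$ do and otherwise lands in $\mathsf{F}$, i.e.\ we recover the classical two-valued truth table.

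The only real conceptual point is recognising that the induction must be grounded at $\triangle$-formulas rather than at atoms, because the hypothesis says nothing about the internal structure of formulas sitting under a $\triangle$; once this is in place, each case is a mechanical unfolding of the clauses of Definition \ref{def:BDframesemantics}, so no substantive obstacle arises.
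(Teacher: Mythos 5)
Your proposal is correct and follows essentially the same route as the paper: the paper also observes that $\phi$ is built from formulas of the form $\triangle\chi$ (for arbitrary $\chi$) using $\neg$, $\wedge$, $\vee$, grounds the induction at those $\triangle$-formulas via $|\triangle\chi|^+=W\setminus|\triangle\chi|^-$, and handles the connectives by a straightforward application of the induction hypothesis. No substantive difference.
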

\begin{proof}
Observe that $\phi$ is constructed from the formulas of the form $\triangle\chi$ using $\neg$, $\wedge$, and $\vee$. We can now proceed by induction on $\phi$. The basis case is simple. From Definition~\ref{def:BDframesemantics}, we see that $|\triangle\chi|^+=W\setminus|\triangle\chi|^-$, whence, indeed, either $w\vDash^+\triangle\chi$ and $w\nvDash^-\triangle\chi$ or $w\nvDash^+\triangle\chi$ and $w\vDash^-\triangle\chi$. 
The cases of $\phi=\psi\vee\psi'$, $\phi=\psi\wedge\psi'$, and $\phi=\neg\psi$ can be shown by straightforward application of the induction hypothesis.
\end{proof}
\section{$\BDtriangle$ case models\label{sec:casemodels}}
In this section, we introduce the $\BDtriangle$ case models. To make the presentation clearer, let us first recall the case models from~\cite{Verheij2016,Verheij2017} and types of arguments over them that we will henceforth call \emph{classical case models} and \emph{classical arguments} since they use the classical logic as background.
\begin{definition}[Classical case models]\label{def:classicalcasemodel}
A \emph{classical case model} is a tuple $\mathfrak{C}_\CPL=\langle\mathtt{C},\preceq\rangle$ s.t.\ $\mathsf{C}$ is a finite set of pairwise incompatible classically satisfiable formulas and $\preceq$ is a total preorder on $\mathsf{C}$.
\end{definition}
\noindent The strict preorder associated with $\preceq$ is interpreted as a preference relation on the set of cases. I.e., $\phi\prec\phi'$ means that the agent prefers $\phi'$ to $\phi$ (or trusts in $\phi'$ more than in $\phi$).
\begin{definition}[Classical arguments and their types]\label{def:classicalrguments}
An \emph{argument} is a tuple $\langle\phi,\phi'\rangle$ of classical propositional formulas. The \emph{case} is the statement $\phi\wedge\phi'$, while a \emph{premise} (conclusion) is any formula $\chi$ s.t.\ $\phi\models_{\CPL}\chi$ ($\phi'\models_{\CPL}\chi$). We say that the argument is \emph{presumptive} iff $\phi\not\models_{\CPL}\phi'$.

An argument $\langle\phi,\chi\rangle$ over a classical case model $\mathfrak{C}_\CPL=\langle\mathtt{C},\preceq\rangle$ is
\begin{itemize}
\item \emph{classically coherent} iff there is $\psi\in\mathsf{C}$ s.t.\ $\psi\models_{\CPL}\phi\wedge\chi$;
\item \emph{classically conclusive} iff it is classically coherent and it holds $\psi\models_{\CPL}\phi\wedge\chi$ for every $\psi\in\mathsf{C}$ s.t.\ $\psi\models_{\CPL}\phi$;
\item \emph{classically presumptively valid} iff it is classically coherent and there is $\psi\in\mathsf{C}$ s.t.\ $\psi\models_{\CPL}\phi\wedge\psi$ and $\psi\succeq\psi'$ for every $\psi'$ s.t.\ $\psi'\models_{\CPL}\phi$.
\end{itemize}
\end{definition}

Let us now present $\BDtriangle$ case models and the counterparts to the coherent, conclusive, and presumptively valid arguments.
\begin{definition}[$\BDtriangle$ case models]\label{def:paraconsistentcasemodel}
A \emph{$\BDtriangle$ case model} is a tuple $\mathfrak{C}_{\BDtriangle}=\langle\mathsf{C},\preceq\rangle$ with $\mathsf{C}$ being a~finite set of $\LBDtriangle$ formulas s.t.\ for any $\phi,\phi'\in\mathsf{C}$, it holds that $\phi\not\models_{\BDtriangle}\bot$ and $\phi\wedge\phi'\models_{\BDtriangle}\bot$, and $\preceq$ a total preorder on $\mathsf{C}$.
\end{definition}
\begin{definition}[Arguments]\label{def:arguments}
An \emph{argument} is a tuple $\langle\phi,\phi'\rangle$ with $\phi,\phi'\in\LBDtriangle$. The \emph{case} is the statement $\phi\wedge\phi'$, while a \emph{premise} (conclusion) is any formula $\chi$ s.t.\ $\phi\models_{\BDtriangle}\chi$ ($\phi'\models_{\BDtriangle}\chi$). We say that the argument is \emph{presumptive} iff $\phi\not\models_{\BDtriangle}\phi'$.
\end{definition}
\noindent We can interpret $\psi\in\mathsf{C}$ as witnesses' testimonies. A testimony might be contradictory or omit information relevant to the case. Thus, given an argument $\langle\phi,\chi\rangle$, it makes sense to differentiate between three kinds of conclusions.
\begin{enumerate}
\item Given $\phi$, $\chi$ is claimed to be \emph{true} but nothing is said whether it is also non-false.
\item Given $\phi$, $\chi$ is claimed to be \emph{non-false} but nothing is said about whether it is true as well.
\item Given $\phi$, $\chi$ is claimed to be \emph{true and non-false}.
\end{enumerate}

Let us now recall part~\ref{example:robbery} of the running example and build a case model.
\begin{example}[Witnessing a robbery, formalisation]\label{example:coherentconclusive}
The investigator in part~\ref{example:robbery} can build the following case model $\mathfrak{C}$ (we omit the ordering for now):
\begin{align*}
\mathsf{C}&=\{\underbrace{\mathbf{t}(l)\wedge\mathbf{n}(s)\wedge\mathbf{f}(b)}_{\mathsf{c}_1},\underbrace{\mathbf{n}(l)\wedge\mathbf{b}(s)\wedge\mathbf{t}(b)}_{\mathsf{c}_2},\underbrace{\mathbf{t}(l)\wedge\mathbf{t}(s)\wedge\mathbf{b}(b)}_{\mathsf{c}_3}\}
\end{align*}
\end{example}
\noindent Using part~\ref{example:coherentconclusive} of the running example, we define the counterparts to coherent and conclusive arguments from~\cite{Verheij2016,Verheij2017}.
\begin{definition}[Coherent arguments]\label{def:+-coherence}
Let $\mathfrak{C}=\langle\mathsf{C},\preceq\rangle$.  $\langle\phi,\chi\rangle$ is
\begin{itemize}
\item \emph{negatively coherent} (denoted $\mathfrak{C}\models\phi\mapsto^-\chi$) over $\mathfrak{C}$ iff there is $\psi\in\mathsf{C}$ s.t.\ $\chi\models_{\BDtriangle}\phi\wedge\neg\triangle\neg\chi$;
\item \emph{positively coherent} (denoted $\mathfrak{C}\models\phi\mapsto^+\chi$) over $\mathfrak{C}$ iff there is $\psi\in\mathsf{C}$ s.t.\ $\chi\models_{\BDtriangle}\phi\wedge\triangle\chi$;
\item \emph{strongly coherent} (denoted $\mathfrak{C}\models\phi\mapsto^\pm\chi$) over $\mathfrak{C}$ iff there is $\psi\in\mathsf{C}$ s.t.\ $\chi\models_{\BDtriangle}\phi\wedge\mathbf{t}(\chi)$.
\end{itemize}
\end{definition}
\begin{definition}[Conclusive arguments]\label{def:+-conclusiveness}
Let $\mathfrak{C}=\langle\mathsf{C},\preceq\rangle$.  $\langle\phi,\chi\rangle$ is 
\begin{itemize}
\item \emph{negatively conclusive} over $\mathfrak{C}$ (denoted $\mathfrak{C}\!\models\!\phi\!\Rightarrow^-\!\chi$) iff it is negatively coherent and it holds that if $\chi\!\models_{\BDtriangle}\!\phi$, then $\psi\!\models_{\BDtriangle}\!\phi\!\wedge\!\neg\triangle\neg\chi$ for any $\psi\in\mathsf{C}$;
\item \emph{positively conclusive} over $\mathfrak{C}$ (denoted $\mathfrak{C}\!\models\!\phi\!\Rightarrow^+\!\chi$) iff it is positively coherent and it holds that if $\psi\!\models_{\BDtriangle}\!\phi$, then $\psi\!\models_{\BDtriangle}\!\phi\!\wedge\triangle\chi$ for any $\psi\in\mathsf{C}$;
\item \emph{strongly conclusive} over $\mathfrak{C}$ (denoted $\mathfrak{C}\!\models\!\phi\!\Rightarrow^\pm\!\chi$) iff it is strongly coherent, and it holds that if $\psi\models_{\BDtriangle}\phi$, then $\psi\models_{\BDtriangle}\phi\wedge\mathbf{t}(\chi)$ for any $\psi\in\mathsf{C}$.
\end{itemize}
\end{definition}
\begin{remark}\label{rem:coherentconclusiveinterpretation}
Let us provide an intuitive explanation of coherent and conclusive arguments. We begin with coherent arguments:
\begin{itemize}
\item for an argument to be \emph{negatively coherent}, there has to be a case that supports the premise and \emph{does not contradict the conclusion};
\item for an argument to be \emph{positively coherent}, there has to be a case that supports the premise and also \emph{supports the conclusion};
\item for an argument to be \emph{strongly coherent}, there has to be a case that supports the premise, \emph{does not contradict the conclusion}, and \emph{supports it}.
\end{itemize}

Conclusive arguments can be construed as follows:
\begin{itemize}
\item for an argument to be \emph{negatively conclusive}, no case satisfying the premise should \emph{contradict} the conclusion of a  argument;
\item for an argument to be \emph{positively conclusive}, all cases satisfying the premises of an argument should support its conclusion.
\end{itemize}

\noindent Observe that the arguments that are both positively and negatively conclusive are strongly conclusive as well. On the other hand, $\langle\phi,\chi\rangle$ can be both positively and negatively coherent but not strongly coherent if there is no case $\mathsf{c}$ s.t.\ $\mathsf{c}\models_{\BDtriangle}\mathbf{t}(\chi)$.
\end{remark}
\begin{remark}[$\BDtriangle$ and classical arguments]\label{rem:classicalvsBDtrianglearguments}
Note that while there is no classical case model over which both $\mathcal{A}=\langle\phi,\chi\rangle$ and $\mathcal{B}=\langle\phi,\neg\chi\rangle$ are classically conclusive (albeit, they can be presumptively valid), it is possible that they are both \emph{positively conclusive} (\emph{negatively conclusive}) if $\mathsf{c}\models_{\BDtriangle}\mathbf{b}(\chi)$ (resp., $\mathsf{c}\models_{\BDtriangle}\mathbf{n}(\chi)$) for every $\mathsf{c}\in\mathsf{C}$. Still, there is no $\BDtriangle$ case model over which $\mathcal{A}$ and $\mathcal{B}$ are \emph{strongly conclusive}.

In addition, it is clear that no argument of the form $\langle\phi,\neg\phi\rangle$ is \emph{classically coherent} since $\phi\wedge\neg\phi$ is classically unsatisfiable. On the other hand, $\langle s,\neg s\rangle$ is \emph{positively coherent} (by $\mathsf{c}_2$) in the model from the part~\ref{example:coherentconclusive} of the running example.

Finally, it is easy to see that every coherent deductive classical argument $\langle\phi,\chi\rangle$ (i.e., the one where $\phi\models_{\CPL}\chi$) is also classically conclusive. In the case of $\BDtriangle$ arguments, only the weaker statement holds: \emph{if $\phi\models_{\BDtriangle}\chi$ and $\langle\phi,\chi\rangle$ is positively coherent, then it is positively conclusive as well}. E.g., $p\wedge\neg p\wedge q\models_{\BDtriangle}p\wedge\neg p$ but $\mathbf{t}(p\wedge\neg p)$ always has value $\false$, whence $\langle p\wedge\neg p\wedge q,p\wedge\neg p\rangle$ can never be negatively or strongly coherent (and thus, negatively or strongly conclusive).
\end{remark}

Let us now define the $\BDtriangle$ counterparts of presumptively valid arguments.
\begin{definition}[Presumptively valid arguments]\label{def:+-presumptivevalidity}
An argument $\mathcal{A}=\langle\phi,\chi\rangle$ is:
\begin{itemize}
\item \emph{positively presumptively valid} (denoted $\mathfrak{C}\models\phi\!\rightsquigarrow^+\!\chi$) iff there is $\psi\in\mathsf{C}$ s.t.\ $\psi\models_{\BDtriangle}\phi\wedge\triangle\chi$ and $\psi\succeq\psi'$ for any $\psi'$ s.t.\ $\psi'\models\phi$;
\item \emph{negatively presumptively valid} (denoted $\mathfrak{C}\models\phi\!\rightsquigarrow^-\!\chi$) iff there is $\psi\in\mathsf{C}$ s.t.\ $\psi\models_{\BDtriangle}\phi\wedge\neg\triangle\neg\chi$ and $\psi\succeq\psi'$ for any $\psi'$ s.t.\ $\psi'\models\phi$;
\item \emph{strongly presumptively valid} (denoted $\mathfrak{C}\models\phi\!\rightsquigarrow^\pm\!\chi$) iff there is $\psi\in\mathsf{C}$ s.t.\ $\psi\models_{\BDtriangle}\phi\wedge\mathbf{t}(\chi)$ and $\psi\succeq\psi'$ for any $\psi'$ s.t.\ $\psi'\models\phi$.
\end{itemize}
\end{definition}
\begin{convention}
We will further call $\psi$ the \emph{witnessing case for $\mathcal{A}$}.
\end{convention}
\begin{remark}\label{rem:presumptivelyvalidinterpretation}
We can now explain presumptively valid arguments similarly to how we interpreted coherent and conclusive ones.
\begin{itemize}
\item An argument is \emph{negatively coherent} when there is the most preferred case that supports its premise and \emph{does not contradict the conclusion}.
\item An argument is \emph{positively coherent} when there is the most preferred case that supports \emph{both its premise and conclusion}.
\end{itemize}
\end{remark}
\begin{example}[Witnessing a robbery, preferences]\label{example:presumptivelyvalid}
We return to the model in part~\ref{example:coherentconclusive}. The investigator now wants to find out how the robber escaped from the office. It is clear that neither $\langle\top,s\rangle$ nor $\langle\top,\neg s\rangle$ is strongly conclusive. On the other hand, nobody \emph{explicitly denied} that the robber was limping, whence $\langle\top,l\rangle$ is \emph{negatively conclusive}. The investigator thinks that it is reasonable to take $l$ to be true. Unfortunately, even assuming $l$, neither $\langle l,s\rangle$ nor $\langle l,\neg s\rangle$ is conclusive.

The investigator rereads the accounts of $\mathsf{c}_1$, $\mathsf{c}_2$, and $\mathsf{c}_3$ and notices that $\mathsf{c}_3$ was the only one to follow the robber out of the office. On the other hand, $\mathsf{c}_1$ hid under the table and was sitting there during the robbery. Thus, the preference is given as $\mathsf{c}_1\prec\mathsf{c}_2\prec\mathsf{c}_3$. Hence, $\langle l,s\rangle$ is \emph{strongly presumptively valid}.
\end{example}
\begin{remark}
It is important to note that \emph{both following statements are false}:
\begin{itemize}
\item $\langle\phi,\chi\rangle$ is negatively coherent (resp., presumptively valid, conclusive) iff $\langle\phi,\neg\chi\rangle$ is positively coherent (resp., presumptively valid, conclusive);
\item $\langle\phi,\chi\rangle$ is positively coherent (resp., presumptively valid, conclusive) iff $\langle\phi,\neg\chi\rangle$ is negatively coherent (resp., presumptively valid, conclusive).
\end{itemize}
Indeed, recall part~\ref{example:coherentconclusive} of the running example. $\langle\top,l\rangle$ is negatively coherent while $\langle\top,\neg l\rangle$ is not positively coherent. $\langle b,\neg s\rangle$ is negatively presumptively valid but $\langle s,\neg b\rangle$ is positively presumptively valid but $\langle s,b\rangle$ is not negatively presumptively valid.
\end{remark}

The following statement establishes the expected relations between coherent, presumptively valid, and conclusive arguments and follows immediately from Definitions~\ref{def:+-coherence}--\ref{def:+-presumptivevalidity}.
\begin{proposition}
The diagram in Fig.~\ref{fig:argumentsdiagram} depicts the inclusions between different types of arguments.
\begin{figure}
\centering
\small{
\[\xymatrix@C=.5pc @R=.5pc{
\phi\mapsto^+\chi\ar@{}[rr]|-*[@]{\supseteq}\ar@{}[rd]|-*[@]{\supseteq}&&\phi\rightsquigarrow^+\chi\ar@{}[rr]|-*[@]{\supseteq}\ar@{}[rd]|-*[@]{\supseteq}&&\phi\Rightarrow^+\chi\ar@{}[rd]|-*[@]{\supseteq}&\\
&\phi\mapsto^\pm\chi\ar@{}[rr]|-*[@]{\supseteq}&&\phi\rightsquigarrow^\pm\chi\ar@{}[rr]|-*[@]{\supseteq}&&\phi\Rightarrow^\pm\chi\\
\phi\mapsto^-\chi\ar@{}[rr]|-*[@]{\supseteq}\ar@{}[ru]|-*[@]{\supseteq}&&\phi\rightsquigarrow^-\chi\ar@{}[rr]|-*[@]{\supseteq}\ar@{}[ru]|-*[@]{\supseteq}&&\phi\Rightarrow^-\chi\ar@{}[ru]|-*[@]{\supseteq}&
}
\]
}
\caption{Conclusive ($\Rightarrow$), presumptively valid ($\rightsquigarrow$), and coherent ($\mapsto$) arguments with same statements.}
\label{fig:argumentsdiagram}
\end{figure}
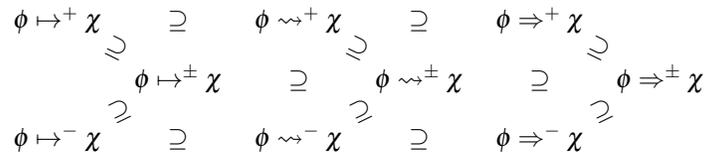
\end{proposition}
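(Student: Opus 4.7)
The plan is to verify each of the inclusions shown in the diagram directly by unfolding Definitions~\ref{def:+-coherence}--\ref{def:+-presumptivevalidity}. There are two families to deal with: (i) the \emph{horizontal} inclusions $\mapsto^\star\supseteq\rightsquigarrow^\star\supseteq\Rightarrow^\star$ for each $\star\in\{+,-,\pm\}$, and (ii) the \emph{vertical} inclusions $\mapsto^\pm\subseteq\mapsto^+$, $\mapsto^\pm\subseteq\mapsto^-$ and the analogues for $\rightsquigarrow$ and $\Rightarrow$.

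For the vertical inclusions, the key observation is that the defining formula $\mathbf{t}(\chi)=\triangle\chi\wedge\neg\triangle\neg\chi$ is a conjunction, so any case $\psi$ that satisfies $\phi\wedge\mathbf{t}(\chi)$ automatically satisfies both $\phi\wedge\triangle\chi$ and $\phi\wedge\neg\triangle\neg\chi$. Applying this observation to the witnessing case in each of the three $\pm$-versions (coherent, presumptively valid, conclusive) yields the corresponding $+$- and $-$-versions; in the presumptively valid case the same $\psi$ remains a $\preceq$-maximum among cases satisfying $\phi$, and in the conclusive case the universal quantification is inherited unchanged.

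For the horizontal direction, the inclusion $\mapsto^\star\supseteq\rightsquigarrow^\star$ is immediate: the $\preceq$-maximal witness $\psi$ in the presumptive-validity definition already fulfils the existential condition required for coherence. For $\rightsquigarrow^\star\supseteq\Rightarrow^\star$, suppose $\langle\phi,\chi\rangle$ is $\star$-conclusive. By the coherence clause built into conclusiveness, at least one case satisfies $\phi$; since $\mathsf{C}$ is finite and $\preceq$ is total, the subset of cases satisfying $\phi$ admits a $\preceq$-maximal element $\psi^*$. The universal clause of conclusiveness then guarantees that $\psi^*\models_{\BDtriangle}\phi\wedge\triangle\chi$ (resp.\ $\phi\wedge\neg\triangle\neg\chi$, $\phi\wedge\mathbf{t}(\chi)$), which is exactly the witnessing condition for $\star$-presumptive validity.

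No genuine obstacle is expected: the whole proof is a straightforward unfolding of definitions. The only mildly non-trivial step is the invocation of finiteness of $\mathsf{C}$ and totality of $\preceq$ to extract the maximum $\psi^*$ when passing from conclusive to presumptively valid; without both hypotheses this step could fail, but both are part of Definition~\ref{def:paraconsistentcasemodel}.
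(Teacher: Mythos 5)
Your proof is correct and takes essentially the same route as the paper, which offers no separate argument but simply asserts that the proposition follows immediately from the definitions of coherent, presumptively valid, and conclusive arguments. Your write-up just makes that unfolding explicit, including the one genuinely non-immediate detail (using finiteness of $\mathsf{C}$ and totality of $\preceq$ to extract a most-preferred case satisfying the premise when passing from conclusive to presumptively valid), which is exactly what the definitions license.
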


\noindent It is instructive to see that $\BDtriangle$ models allow classical presumptive reasoning if the values of formulas in the cases are classical. We define a class of $\BDtriangle$ case models ‘simulating’ the classical ones.
\begin{definition}[Quasi-classical case models]\label{def:BDtriangleclassical}
A $\BDtriangle$ case model $\mathfrak{C}=\langle\mathsf{C},\preceq\rangle$ is called \emph{quasi-classical} iff every $\chi\in\mathsf{C}$ is constructed from $\mathbf{t}(p)$'s via applications of $\neg$, $\wedge$, and $\vee$.
\end{definition}
\begin{proposition}\label{prop:samearguments}
Let $\mathfrak{C}$ be a quasi-classical $\BDtriangle$ case model and $\triangleright\in\{\mapsto,\rightsquigarrow,\Rightarrow\}$. Then $\mathfrak{C}\models\phi\triangleright^+\chi$ iff $\mathfrak{C}\models\phi\triangleright^-\chi$ iff $\mathfrak{C}\models\phi\triangleright^\pm\chi$.
\end{proposition}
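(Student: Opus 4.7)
The plan is to reduce the three variants of each $\triangleright$ to a common witness-level sequent and then collapse them via Proposition~\ref{prop:BDtriangleclassicality}. First, I would note that for each $\triangleright \in \{\mapsto, \rightsquigarrow, \Rightarrow\}$, the $+$, $-$, and $\pm$ versions differ only in whether the conclusion appears as $\triangle\chi$, $\neg\triangle\neg\chi$, or $\mathbf{t}(\chi)$ inside the witness-sequent $\psi \models_{\BDtriangle} \phi \wedge \cdot$; everything else (existence of $\psi$, maximality under $\preceq$, universal quantification over $\mathsf{C}$) is shared. So it suffices to show, for each $\psi \in \mathsf{C}$, that the three sequents $\psi \models_{\BDtriangle} \phi \wedge \triangle\chi$, $\psi \models_{\BDtriangle} \phi \wedge \neg\triangle\neg\chi$, and $\psi \models_{\BDtriangle} \phi \wedge \mathbf{t}(\chi)$ are pairwise equivalent. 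The strong-to-positive and strong-to-negative directions are immediate from $\mathbf{t}(\chi) = \triangle\chi \wedge \neg\triangle\neg\chi$.

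For the converse, I would invoke Proposition~\ref{prop:BDtriangleclassicality} on $\psi$: since $\mathfrak{C}$ is quasi-classical, every variable of $\psi$ lies in the scope of a $\triangle$ (via the expansion $\mathbf{t}(p) = \triangle p \wedge \neg\triangle\neg p$), so $|\psi|^+$ and $|\psi|^-$ partition $W$ in every model. Unfolding each witness-sequent through Definition~\ref{def:BDframesemantics} and using this partition to rewrite the condition $|\phi \wedge X(\chi)|^- \subseteq |\psi|^-$ as the disjointness $|\psi|^+ \cap |\phi \wedge X(\chi)|^- = \varnothing$, each sequent becomes a combination of inclusion and disjointness conditions involving $|\psi|^+$, $|\phi|^\pm$, and $|\chi|^\pm$. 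Under the intended quasi-classical reading of the argument formulas (so Proposition~\ref{prop:BDtriangleclassicality} applies to $\chi$ as well, giving $|\chi|^+ = W \setminus |\chi|^-$), the inclusion $|\psi|^+ \subseteq |\chi|^+$ becomes equivalent to the disjointness $|\psi|^+ \cap |\chi|^- = \varnothing$, and the three sequents coincide. The cases of $\rightsquigarrow$ and $\Rightarrow$ then follow from the same reduction, since their extra preference and universality conditions are insensitive to the $+/-/\pm$ distinction.

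The main subtlety I anticipate is precisely the appeal to Proposition~\ref{prop:BDtriangleclassicality} on the conclusion formula: without restricting $\chi$ to the fragment to which Proposition~\ref{prop:BDtriangleclassicality} applies, the positive and strong variants genuinely diverge (for instance, the conclusion $\chi = p \vee \neg\triangle p$ may take value $\mathbf{B}$ when $v_\mathbf{4}(p) = \mathbf{B}$, whereupon positive coherence can hold while strong coherence fails), so the collapse really relies on keeping both $\phi$ and $\chi$ inside the quasi-classical fragment.
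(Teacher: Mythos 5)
Your overall route is the same as the paper's: reduce all three argument types to the witness-level sequents $\psi\models_{\BDtriangle}\phi\wedge\triangle\chi$, $\psi\models_{\BDtriangle}\phi\wedge\neg\triangle\neg\chi$, $\psi\models_{\BDtriangle}\phi\wedge\mathbf{t}(\chi)$, note that the preference/universality conditions of $\rightsquigarrow$ and $\Rightarrow$ are untouched by the $+/-/\pm$ distinction, get the strong-to-positive/negative directions from $\mathbf{t}(\chi)=\triangle\chi\wedge\neg\triangle\neg\chi$, and collapse the converse via Proposition~\ref{prop:BDtriangleclassicality}. The one real divergence is where that proposition is applied: the paper applies it only to the case $\psi$ (which is all that quasi-classicality of $\mathfrak{C}$ supplies) and then asserts that $\psi\models_{\BDtriangle}\phi\wedge\triangle\chi$ already yields $\psi\models_{\BDtriangle}\mathbf{t}(\chi)$ for an \emph{arbitrary} $\chi\in\LBDtriangle$, whereas you additionally assume that the argument formulas themselves are quasi-classical and flag this as essential.

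Your caveat is correct, and it exposes a genuine problem with the statement as printed rather than a defect of your proof: over the quasi-classical model $\mathsf{C}=\{\mathbf{t}(q),\neg\mathbf{t}(q)\}$, the argument $\langle\top,\,p\vee\neg\triangle p\rangle$ is positively coherent (since $|p\vee\neg\triangle p|^+=W$ and $|\triangle(p\vee\neg\triangle p)|^-=\varnothing$ in every model, any case witnesses it), but it is not negatively or strongly coherent: in a model with a world where $q$ is $\true$ and $p$ is $\both$ we have $w\in|\mathbf{t}(q)|^+$ while $w\nvDash^+\neg\triangle\neg(p\vee\neg\triangle p)$, and symmetrically for $\neg\mathbf{t}(q)$. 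So the paper's inference step is exactly the one that breaks without your extra hypothesis, and no proof of the unrestricted statement can exist. Two refinements to your argument: only $\chi$ needs to be restricted, since $\phi$ occurs identically in all three sequents; and once Proposition~\ref{prop:BDtriangleclassicality} applies to $\chi$ (so $|\chi|^+=W\setminus|\chi|^-$ in every model), the three formulas $\triangle\chi$, $\neg\triangle\neg\chi$, $\mathbf{t}(\chi)$ have literally the same positive and negative extensions, hence the three witness sequents coincide for \emph{every} $\psi$ --- your restricted version therefore holds over arbitrary $\BDtriangle$ case models and does not need quasi-classicality of the cases at all, which makes the partition argument on $|\psi|^\pm$ dispensable.
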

\begin{proof}
We only consider the case of coherent arguments as conclusive and presumptively valid ones can be tackled similarly. It suffices to prove that positively coherent arguments and negatively coherent arguments are strongly coherent. Let $\mathfrak{C}$ be quasi-classical and $\mathfrak{C}\models\phi\mapsto^+\chi$. Then, there is $\psi\in\mathfrak{C}$ s.t.\ $\psi\models_{\BDtriangle}\phi\wedge\triangle\chi$. But then, from Definition~\ref{def:BDframesemantics} and Proposition~\ref{prop:BDtriangleclassicality}, it is clear that $\psi\models_{\BDtriangle}\mathbf{t}(\chi)$. Likewise, let $\mathfrak{C}\models\phi\mapsto^-\chi$, and, accordingly, $\psi\models_{\BDtriangle}\phi\wedge\neg\triangle\neg\chi$. Again, using Definition~\ref{def:BDframesemantics} and Proposition~\ref{prop:BDtriangleclassicality}, we have $\psi\models_{\BDtriangle}\mathbf{t}(\chi)$. The result now follows.
\end{proof}

\noindent We finish the section by showing how to build a $\BDtriangle$ counterpart of a classical case model
that preserves all arguments.
\begin{definition}\label{def:BDtrianglecounterparts}
Let $\mathfrak{C}=\langle\mathsf{C},\preceq\rangle$ be a classical case model s.t.\ all formulas in $\mathsf{C}$ are over $\{\neg,\wedge,\vee\}$. In addition, for $\phi\in\LBD$, denote $\phi^\mathbf{t}$ the result of substitution of every variable $p$ occurring in $\phi$ with $\mathbf{t}(p)$.

The \emph{$\BDtriangle$ counterpart} of $\mathfrak{C}$ is $\mathfrak{C}_{\BDtriangle}=\langle\mathsf{C}_{\BDtriangle},\preceq_{\BDtriangle}\rangle$ with $\mathsf{C}_{\BDtriangle}=\{\chi^\mathbf{t}:\chi\in\mathsf{C}\}$ and $\chi^\mathbf{t}\preceq_{\BDtriangle}{\chi'}^\mathbf{t}$ iff $\chi\preceq_{\BD}\chi'$.
\end{definition}
\begin{theorem}\label{theorem:BDtrianglecounterparts}
Let $\mathfrak{C}=\langle\mathsf{C},\preceq\rangle$ be a classical case model s.t.\ all formulas in $\mathsf{C}$ are over $\{\neg,\wedge,\vee\}$.
\begin{enumerate}
\item The $\BDtriangle$ counterpart $\mathfrak{C}_{\BDtriangle}$ of $\mathfrak{C}$ is quasi-classical.
\item $\langle\phi,\chi\rangle$ is coherent (resp., presumptively valid, conclusive) in $\mathfrak{C}$ iff $\langle\phi^\mathbf{t},\chi^\mathbf{t}\rangle$ is strongly coherent (resp., strongly presumptively valid, strongly conclusive) in $\mathfrak{C}_{\BDtriangle}$.
\end{enumerate}
\end{theorem}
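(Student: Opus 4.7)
The strategy is to reduce the whole theorem to one correspondence lemma stating that the $\mathbf{t}$-translation is a semantics-preserving embedding of classical propositional logic into $\BDtriangle$. Part 1 is then mostly routine: quasi-classicality of $\mathfrak{C}_{\BDtriangle}$ is immediate from Definition~\ref{def:BDtriangleclassical} since by construction every $\chi^\mathbf{t}$ is built from $\mathbf{t}(p)$'s using only $\neg,\wedge,\vee$. What still needs checking is that $\mathfrak{C}_{\BDtriangle}$ satisfies the $\BDtriangle$ case-model conditions of Definition~\ref{def:paraconsistentcasemodel}: each $\chi^\mathbf{t}$ must be $\BDtriangle$-satisfiable, any two distinct $\chi^\mathbf{t},{\chi'}^\mathbf{t}$ must $\BDtriangle$-entail $\bot$, and $\preceq_{\BDtriangle}$ must be a total preorder. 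The preorder property is immediate by transport, and the other two conditions fall out of the correspondence lemma.

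The lemma I would state and prove is as follows: for every $\phi\in\LBD$ and every $\BDtriangle$ model $\mathfrak{M}=\langle W,v^+,v^-\rangle$, one has $|\phi^\mathbf{t}|^-=W\setminus|\phi^\mathbf{t}|^+$ (by Proposition~\ref{prop:BDtriangleclassicality}), and $w\vDash^+\phi^\mathbf{t}$ iff the classical valuation $v_w$ defined by $v_w(p)=1$ iff $w\vDash^+\mathbf{t}(p)$ classically satisfies $\phi$; moreover $\mathbf{t}(\phi^\mathbf{t})$ and $\phi^\mathbf{t}$ have the same positive and negative interpretations. The proof is a straightforward induction on $\phi$: the base case uses Definition~\ref{def:BDframesemantics} applied to $\mathbf{t}(p)=\triangle p\wedge\neg\triangle\neg p$; the negation step uses the induction hypothesis $|\psi^\mathbf{t}|^-=W\setminus|\psi^\mathbf{t}|^+$ together with $w\vDash^+\neg\psi$ iff $w\vDash^-\psi$; and the $\wedge,\vee$ clauses coincide with classical truth tables. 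Conversely, any classical valuation $v$ is realised as a $\BDtriangle$ point by setting $v^+(p)=\{\ast\}$ when $v(p)=1$ and $v^-(p)=\{\ast\}$ when $v(p)=0$. This gives $\phi\models_{\CPL}\chi$ iff $\phi^\mathbf{t}\models_{\BDtriangle}\chi^\mathbf{t}$; in particular, classical satisfiability of each $\chi\in\mathsf{C}$ transfers to $\BDtriangle$-satisfiability of $\chi^\mathbf{t}$, and pairwise classical incompatibility transfers to pairwise $\BDtriangle$-incompatibility, finishing Part 1.

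Part 2 is then a uniform application of the lemma to each clause of Definitions~\ref{def:classicalrguments} and~\ref{def:+-coherence}--\ref{def:+-presumptivevalidity}. For coherence, a case $\psi\in\mathsf{C}$ classically satisfies $\phi\wedge\chi$ iff $\psi^\mathbf{t}\models_{\BDtriangle}\phi^\mathbf{t}\wedge\chi^\mathbf{t}$, and by the $\mathbf{t}(\phi^\mathbf{t})\equiv\phi^\mathbf{t}$ part of the lemma this is iff $\psi^\mathbf{t}\models_{\BDtriangle}\phi^\mathbf{t}\wedge\mathbf{t}(\chi^\mathbf{t})$, i.e.\ strong coherence. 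Conclusiveness transports the universal quantifier over $\mathsf{C}$ along the bijection $\chi\mapsto\chi^\mathbf{t}$, which is order-preserving by Definition~\ref{def:BDtrianglecounterparts}. Presumptive validity uses the same transport plus the fact that $\preceq_{\BDtriangle}$ is the pushforward of $\preceq$, so the witnessing case of $\langle\phi,\chi\rangle$ lifts to that of $\langle\phi^\mathbf{t},\chi^\mathbf{t}\rangle$ and back. The main obstacle I anticipate is setting up the negation step of the correspondence lemma cleanly, since this is where the positive and negative interpretations interact most delicately; once the base case and the $\neg$ step are in place, the rest is bookkeeping.
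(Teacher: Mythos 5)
Your proposal is correct and follows essentially the same route as the paper: the crux in both is the lemma that the $\mathbf{t}$-translation preserves entailment ($\tau\models_{\CPL}\tau'$ iff $\tau^\mathbf{t}\models_{\BDtriangle}{\tau'}^\mathbf{t}$, with $\mathbf{t}(\chi^\mathbf{t})$ equivalent to $\chi^\mathbf{t}$ by Proposition~\ref{prop:BDtriangleclassicality}), after which coherence, conclusiveness and presumptive validity transfer along the order-preserving map $\chi\mapsto\chi^\mathbf{t}$. The only difference is presentational: the paper reads the lemma off the truth tables (classical behaviour of $\neg,\wedge,\vee$ on $\true,\false$) and leaves the case-model conditions for Part 1 to the definitions, whereas you prove the lemma by induction in the frame semantics and explicitly verify satisfiability and pairwise incompatibility of the translated cases.
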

\begin{proof}
1.\ holds by Definitions~\ref{def:classicalcasemodel} and~\ref{def:BDtrianglecounterparts}. Let us now consider 2. We will only tackle the case of presumptively valid arguments since coherent and conclusive ones can be dealt with similarly.

Let $\mathfrak{C}=\langle\mathsf{C},\preceq\rangle$ be a classical case model and $\langle\phi,\chi\rangle$ presumptively valid on $\mathfrak{C}$. Then, there is $\psi\in\mathsf{C}$ s.t.\ $\psi\models_{\CPL}\phi\wedge\chi$ and $\psi\succeq\psi'$ for every $\psi'\in\mathsf{C}$ s.t.\ $\psi'\models_{\CPL}\phi$. Now observe from Definition~\ref{def:truthtablesBDtriangle} that $\neg$, $\wedge$, and $\vee$ behave classically on $\true$ and $\false$. Thus, it is clear that $\tau\models_{\CPL}\tau'$ iff $\tau^\mathbf{t}\models_{\BDtriangle}{\tau'}^\mathbf{t}$ for every $\tau,\tau'\in\LBD$. It now follows that $\psi^\mathbf{t}\models_{\BDtriangle}\phi^\mathbf{t}\wedge\mathbf{t}(\chi^\mathbf{t})$ and $\psi^\mathbf{t}\succeq_{\BDtriangle}{\psi'}^\mathbf{t}$ for every ${\psi'}^\mathbf{t}\models_{\BDtriangle}\phi^\mathbf{t}$, as required. Conversely, let $\langle\phi,\chi\rangle$ be not presumptively valid on $\mathfrak{C}$. Then, there is no $\psi\in\mathsf{C}$ s.t.\ $\psi\models_{\CPL}\phi\wedge\chi$ and $\psi\succeq\psi'$ for every $\psi'\in\mathsf{C}$ s.t.\ $\psi'\models_{\CPL}\phi$. Again, from Definition~\ref{def:truthtablesBDtriangle}, it follows that there is no $\psi^\mathbf{t}$ s.t.\ $\psi^\mathbf{t}\models_{\BDtriangle}\phi^\mathbf{t}\wedge\mathbf{t}(\chi^\mathbf{t})$ and $\psi^\mathbf{t}\succeq_{\BDtriangle}{\psi'}^\mathbf{t}$ for every ${\psi'}^\mathbf{t}\models_{\BDtriangle}\phi^\mathbf{t}$.
\end{proof}
\section{A two-layered logic for case models\label{sec:logic}}
Conclusive and presumptively valid arguments on \emph{classical} case models can be represented in terms of conditional probabilities~\cite{Verheij2016,Verheij2017}. In this section, we provide a representation of $\BDtriangle$ models and arguments on them in terms of a paraconsistent two-layered logic.

Two-layered logics form a class of formalisms designed to reason about uncertainty: their languages consist of \emph{inner-layer} formulas that describe events and \emph{outer-layer} formulas composed of \emph{modal atoms} of the form $\mathsf{M}\phi$ ($\phi$ being an inner-layer formula and $\mathsf{M}$ the modality interpreted as a measure on the set of events). There are two-layered logics formalising reasoning with classical probabilities~\cite{BaldiCintulaNoguera2020} and their paraconsistent counterparts~\cite{BilkovaFrittellaKozhemiachenkoMajerNazari2022arxiv} presented in~\cite{KleinMajerRad2021}.\footnote{Note that~\cite{KleinMajerRad2021} is not the only paraconsistent interpretation of probabilities: alternative approaches can be found. e.g., in~\cite{Mares1997,Dunn2010,Bueno-SolerCarnielli2016,RodriguesBueno-SolerCarnielli2021}.} Furthermore, there are two-layered logics formalising paraconsistent reasoning with belief and plausibility functions~\cite{BilkovaFrittellaKozhemiachenkoMajerNazari2022arxiv}.

These papers usually study \emph{quantitative} representations of uncertainty. On the other hand, case models provide a \emph{qualitative one} via their preference relations. This shows a degree of affinity between case models and representations of different uncertainty measures by means of total preorders as studied in~\cite{KraftPrattSeidenberg1959} (for the case of probabilities) and~\cite{WongYaoBollmannBurger1991,WongYaoBollmann1992} (belief functions). In~\cite{BilkovaFrittellaKozhemiachenkoMajer2023IJAR}, two-layered logics formalising reasoning with the qualitative counterparts of belief functions and probabilities were presented.

In this section, we present a two-layered logic $\QGBDtriangle$ which is a modification of $\QG$ from~\cite{BilkovaFrittellaKozhemiachenkoMajer2023IJAR}. The inner layer of $\triangleMCB$ is $\BDtriangle$, the outer one is $\biG$ --- an expansion of G\"{o}del logic (cf., e.g.,~\cite{Hajek1998}) with a coimplication $\coimplies$ or the Baaz Delta operator $\triangle$. To connect the layers, we use $\Be$ (with $\Be\phi$ read as ‘the agent believes in $\phi$’). Since we do not impose any restrictions on $\preceq$ in case models, we are interpreting $\Be$ as a \emph{capacity} on the set of events $W$, i.e., via a map $\mu:2^W\rightarrow[0,1]$ which is monotone w.r.t.\ $\subseteq$ with $\mu(W)=1$ and $\mu(\varnothing)=0$. The main goal of the paper is to establish a correspondence between case models and $\QGBDtriangle$ models as well as to show how given an argument $\langle\phi,\phi'\rangle$ to construct a $\QGBDtriangle$ formula that is true in the corresponding model iff the argument is (positively, negatively, or strongly) coherent, conclusive, or presumptively valid.

Let us now recall $\biG$.
\begin{definition}\label{def:biGalgebra}
The bi-G\"{o}del algebra $[0,1]_{\mathsf{G}}=\langle[0,1],0,1,\wedge_\mathsf{G},\vee_\mathsf{G},\rightarrow_{\mathsf{G}},\coimplies,\sim_\mathsf{G},\triangle_\mathsf{G}\rangle$ is defined as follows: for all $a,b\in[0,1]$, $\wedge_\mathsf{G}$ and $\vee_\mathsf{G}$ are given by $a\wedge_\mathsf{G}b\coloneqq\min(a,b)$, $a\vee_\mathsf{G}b\coloneqq\max(a,b)$. The remaining operations are defined below:
\begin{align*}
a\rightarrow_\mathsf{G}b&=
\begin{cases}
1,\text{ if }a\leq b\\
b\text{ else}
\end{cases}
&
a\coimplies_\mathsf{G}b&=
\begin{cases}
0,\text{ if }a\leq b\\
a\text{ else}
\end{cases}
&
{\sim_\mathsf{G}}a&=
\begin{cases}
0,\text{ if }a>0\\
1\text{ else}
\end{cases}
&
\triangle_\mathsf{G}a&=
\begin{cases}
0,\text{ if }a<1\\
1\text{ else}
\end{cases}
\end{align*}
\end{definition}
\begin{remark}
Note that constants $\top$ and $\bot$ are definable as, respectively, $p\rightarrow p$ and $p\coimplies p$, and that $\triangle$ and $\coimplies$ are interdefinable as follows: $\triangle\phi\coloneqq\top\coimplies(\top\coimplies\phi)$, $\phi\coimplies\phi'\coloneqq\phi\wedge{\sim}\triangle(\phi\rightarrow\phi')$.
\end{remark}
\begin{definition}[Language and semantics of $\biG$]
\label{def:semantics:G2}
We fix a~countable set $\Prop$ of propositional variables and consider the following language.
\begin{align*}
\LbiG\ni\phi&\coloneqq p\in\Prop\mid{\sim}\phi\mid(\phi\wedge\phi)\mid(\phi\vee\phi)\mid(\phi\rightarrow\phi)\mid(\phi\Yleft\phi)\mid\triangle\phi
\end{align*}

Let $e:\mathsf{Prop}\rightarrow [0,1]$. For the complex formulas, we define $e(\phi\circ\phi')=e(\phi)\circ_\mathsf{G}e(\phi')$.

Finally, let $\Gamma\cup\{\phi\}\subseteq\LbiG$. We define: $\Gamma\models_{\biG}\phi$ iff $\forall e:\inf\{e(\psi):\psi\in\Gamma\}\leq e(\phi)$.
\end{definition}
\noindent Using $\biG$, we can define $\QGBDtriangle$ as follows.
\begin{definition}\label{def:triangleMCB}
The language of $\QGBDtriangle$ is defined via the following grammar: $\LQGBDtriangle\!\ni\!\alpha\coloneqq\Be\phi\mid\alpha\circ\alpha$ $(\circ\in\{{\sim},\wedge,\vee,\rightarrow,\coimplies,\triangle\},\phi\in\LBDtriangle)$.
A $\QGBDtriangle$ model is a~tuple $\mathscr{M}=\langle W,v^+,v^-,\mu,e\rangle$ with $\langle W,v^+,v^-\rangle$ being a~$\BDtriangle$ model (cf.~Definition~\ref{def:BDframesemantics}), $\mu:2^W\rightarrow[0,1]$ being a capacity. Semantic conditions of $\LQGBDtriangle$ formulas are as follows: $e(\Be\phi)=\mu(|\phi|^+)$ for modal atoms; the values of complex formulas are computed according to Definition~\ref{def:semantics:G2}.

For a given model $\mathscr{M}$, we write $\mathscr{M}\models\alpha$ to designate $e(\alpha)=1$. For a~frame $\mathbb{F}=\langle W,\pi\rangle$ on a~$\QGBDtriangle$ model $\mathscr{M}$, we say that $\alpha\in\LtriangleMCB$ is valid on $\mathbb{F}$ ($\mathbb{F}\models\alpha$) iff $e(\alpha)\!=\!1$ for every $e$ on~$\mathbb{F}$. Finally, for $\Psi\cup\{\alpha\}\subseteq\LQGBDtriangle$, we define the same entailment relation as in Definition~\ref{def:semantics:G2}.
\end{definition}

Let us now establish the correspondence results for coherent, conclusive, and presumptively valid arguments. To do this, we define a class of \emph{$\mu$-counterparts} for every $\BDtriangle$ model.
\begin{definition}[$\mu$-counterparts]\label{def:mucounerparts}
Let $\mathfrak{C}=\langle\mathsf{C},\preceq\rangle$ be a $\BDtriangle$ case model and $\mathsf{C}=\{\mathsf{c}_1,\ldots,\mathsf{c}_n\}$. Its $\mu$-counterpart is a $\QGBDtriangle$-model $\mathscr{M}_{\mathfrak{C}}=\langle\{w_1,\ldots,w_n\},v^+,v^-,\mu_{\preceq},e\rangle$ for which the following holds.
\begin{enumerate}
\item For every $\mathsf{c}_i\in\mathsf{C}$ and every $\phi$, if $\mathsf{c}_i\models_{\BDtriangle}\phi$ ($\mathsf{c}_i\models_{\BDtriangle}\neg\phi$), then $w_i\vDash^+\phi$ ($w_i\vDash^-\phi$).
\item For every $\mathsf{c}_i,\mathsf{c}_j\in\mathsf{C}$, $\mathsf{c}_i\preceq\mathsf{c}_j$ iff $\mu_{\preceq}(\{w_i\})\leq\mu_{\preceq}(\{w_j\})$.
\item For every $\mathsf{c}_i\in\mathsf{C}$, $\mu_{\preceq}(\{\mathsf{c}_i\})>0$.
\end{enumerate}
\end{definition}

\begin{theorem}\label{theorem:coherentrepresenation}
Let $\mathfrak{C}=\langle\mathsf{C},\preceq\rangle$ be a $\BDtriangle$ case model and $\mathscr{M}_{\mathfrak{C}}$ its $\mu$-counterpart. Then the following holds.
\begin{enumerate}
\item $\mathfrak{C}\models\phi\mapsto^+\phi'$ iff $\mathscr{M}_{\mathfrak{C}}\models{\sim\sim}\Be(\phi\wedge\triangle\phi')$.
\item $\mathfrak{C}\models\phi\mapsto^-\phi'$ iff $\mathscr{M}_{\mathfrak{C}}\models{\sim\sim}\Be(\phi\wedge\neg\triangle\neg\phi')$.
\item $\mathfrak{C}\models\phi\mapsto^\pm\phi'$ iff $\mathscr{M}_{\mathfrak{C}}\models{\sim\sim}\Be(\phi\wedge\mathbf{t}(\phi'))$.
\end{enumerate}
\end{theorem}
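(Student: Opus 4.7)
My plan is to unfold the outer $\biG$-semantics of ${\sim\sim}\Be\psi$, reduce the resulting inequality to a statement about the existence of a witnessing world in $\mathscr{M}_\mathfrak{C}$, and then transport this to a statement about a witnessing case via Definition~\ref{def:mucounerparts}. I will treat all three items uniformly by instantiating $\psi$ as $\phi\wedge\triangle\phi'$, $\phi\wedge\neg\triangle\neg\phi'$, and $\phi\wedge\mathbf{t}(\phi')$ respectively, since the witnessing condition in Definition~\ref{def:+-coherence} in each case has exactly the shape $\mathsf{c}_i\models_{\BDtriangle}\psi$ with $\psi$ one of these three formulas.

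The first step uses only Definitions~\ref{def:biGalgebra} and~\ref{def:triangleMCB}. Because ${\sim}_\mathsf{G}$ sends $0$ to $1$ and every positive real to $0$, the double negation ${\sim\sim}a$ evaluates to $1$ iff $a>0$. Applied to $a=e(\Be\psi)=\mu_\preceq(|\psi|^+)$, this gives that $\mathscr{M}_\mathfrak{C}\models{\sim\sim}\Be\psi$ is equivalent to $\mu_\preceq(|\psi|^+)>0$. The second step combines the fact that $\mu_\preceq$ is a capacity (monotone with $\mu_\preceq(\varnothing)=0$) with clause~3 of Definition~\ref{def:mucounerparts} (each singleton carries strictly positive measure) to conclude that $\mu_\preceq(|\psi|^+)>0$ iff $|\psi|^+\neq\varnothing$, i.e., iff some $w_i$ in $\mathscr{M}_\mathfrak{C}$ satisfies $w_i\vDash^+\psi$. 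The third step applies clause~1 of Definition~\ref{def:mucounerparts} to convert this to the existence of a case $\mathsf{c}_i\in\mathsf{C}$ with $\mathsf{c}_i\models_{\BDtriangle}\psi$, which is exactly the defining condition of $\phi\mapsto^+\phi'$, $\phi\mapsto^-\phi'$, or $\phi\mapsto^\pm\phi'$ under the three instantiations of $\psi$.

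The main obstacle lies in the third step: clause~1 of Definition~\ref{def:mucounerparts} is written only as the forward implication ``$\mathsf{c}_i\models_{\BDtriangle}\phi$ implies $w_i\vDash^+\phi$'' (and dually for $\neg$), whereas the right-to-left direction of the theorem requires the converse so that $w_i\vDash^+\psi$ entails $\mathsf{c}_i\models_{\BDtriangle}\psi$. I plan to handle this by reading the $\mu$-counterpart canonically: each world $w_i$ is chosen so that its positive and negative extents are \emph{exactly} those forced by $\mathsf{c}_i$, which is consistent with the stated clauses and makes the biconditional $w_i\vDash^+\phi\Leftrightarrow\mathsf{c}_i\models_{\BDtriangle}\phi$ hold by a straightforward induction on $\phi$ starting from the propositional variables. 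With this minimality in place, each of the three items of the theorem becomes a short chain of iffs, and parts~2 and~3 repeat the argument of part~1 verbatim with $\triangle\phi'$ replaced by $\neg\triangle\neg\phi'$ or $\mathbf{t}(\phi')$.
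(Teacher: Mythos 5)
Your overall route is the same as the paper's: unfold ${\sim\sim}\Be\psi$ via the bi-G\"{o}del negation to $\mu_\preceq(|\psi|^+)>0$, use clause~3 of Definition~\ref{def:mucounerparts} together with monotonicity of the capacity to reduce this to $|\psi|^+\neq\varnothing$, and then pass between worlds $w_i$ and cases $\mathsf{c}_i$ by clause~1; and you are right that the delicate point is the right-to-left direction, which needs the converse of clause~1 (the paper's own proof uses that converse tacitly when it moves from ``$\mathsf{c}_i\not\models_{\BDtriangle}\psi$ for all $i$'' to ``no $w_i$ with $w_i\vDash^+\psi$''). The problem is that your repair of this step fails. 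The ``canonical'' counterpart you describe --- extents at the propositional variables exactly those forced by $\mathsf{c}_i$, extended compositionally --- is in general not even consistent with clause~1: for $\mathsf{c}_i=p\vee q$ we have $\mathsf{c}_i\not\models_{\BDtriangle}p$ and $\mathsf{c}_i\not\models_{\BDtriangle}q$, so the minimal $w_i$ does not satisfy $p\vee q$ positively although $\mathsf{c}_i\models_{\BDtriangle}p\vee q$; the induction already breaks at $\vee$. Worse, the full biconditional $w_i\vDash^+\phi\Leftrightarrow\mathsf{c}_i\models_{\BDtriangle}\phi$ is not realisable by \emph{any} valuation: take $\mathsf{c}_i=\mathbf{t}(p)$ and a variable $q$; then $\mathsf{c}_i\not\models_{\BDtriangle}q$ and $\mathsf{c}_i\not\models_{\BDtriangle}\neg\triangle q$, so the biconditional would force both $w_i\nvDash^+q$ and $w_i\nvDash^+\neg\triangle q$, while by the semantics $w_i\vDash^+\neg\triangle q$ iff $w_i\nvDash^+q$ --- a contradiction. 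So the claimed ``straightforward induction'' is exactly the point at which your argument collapses.

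The gap is not cosmetic. Over $\mathsf{C}=\{\mathbf{t}(p),\mathbf{f}(p)\}$ consider the argument $\langle\top,\neg\triangle q\rangle$: neither case entails $\top\wedge\triangle\neg\triangle q$, so the argument is not positively coherent, yet in your canonical counterpart $w_1\nvDash^+q$, hence $w_1\vDash^+\top\wedge\triangle\neg\triangle q$, the capacity of $|\top\wedge\triangle\neg\triangle q|^+$ is positive, and ${\sim\sim}\Be(\top\wedge\triangle\neg\triangle q)$ evaluates to $1$; so the right-to-left direction fails in the very model you propose to use. To be fair, the paper's proof is silent on the same issue (it in effect reads clause~1 of Definition~\ref{def:mucounerparts} as a biconditional without saying so), but identifying the weak spot is not the same as fixing it, and your fix does not go through. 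Closing the direction requires genuinely restricting the setting --- e.g.\ demanding that each $\mathsf{c}_i$ be a conjunction of $\mathbf{t}$-, $\mathbf{b}$-, $\mathbf{n}$-, $\mathbf{f}$-literals covering every variable occurring in $\phi$ and $\phi'$, so that a case fixes a four-valued valuation of the relevant formulas --- or reformulating the notion of $\mu$-counterpart; as written, your second and third steps do not compose into a proof of the theorem.
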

\begin{proof}
We consider 2. Other cases can be proved in the same way. Let $\mathfrak{C}\models\phi\mapsto^-\phi'$. Then, there is $\mathsf{c}_i\in\mathfrak{C}$ s.t.\ $\mathsf{c}_i\models_{\BDtriangle}\phi\wedge\neg\triangle\neg\phi'$, whence $w_i\vDash^+\phi\wedge\neg\triangle\neg\phi'$ and $\mu(|\phi\wedge\neg\triangle\neg\phi'|^+)>0$. Thus, $e(\Be(\phi\wedge\neg\triangle\neg\phi'))>0$ and $\mathscr{M}_\mathfrak{C}\models{\sim\sim}\Be(\phi\wedge\neg\triangle\neg\phi')$, as required. Conversely, let $\mathfrak{C}\not\models\phi\mapsto^-\phi'$. Then, for every $\mathsf{c}_i\in\mathfrak{C}$, $\mathsf{c}_i\not\models_{\BDtriangle}\phi\wedge\neg\triangle\neg\phi'$, whence there is no $w_i$ s.t.\ $w_i\vDash^+\phi\wedge\neg\triangle\neg\phi'$. Hence, $|\phi\wedge\neg\triangle\neg\phi'|^+=\varnothing$ and $\mu(|\phi\wedge\neg\triangle\neg\phi'|^+)=0$. Thus, 
${\sim\sim}e(\Be(|\phi\wedge\neg\triangle\neg\phi'|^+))=0$, as required.
\end{proof}

\noindent Observe from Definitions~\ref{def:+-conclusiveness} and~\ref{def:+-presumptivevalidity} that the classes of strongly conclusive (presumptively valid) arguments on the one hand and both positively and negatively conclusive (presumptively valid) arguments on the other hand coincide. Thus, it suffices to provide representation for positively and negatively conclusive (presumptively valid) arguments only.
\begin{theorem}\label{theorem:conclusiverepresentation}
Let $\mathfrak{C}=\langle\mathsf{C},\preceq\rangle$ be a $\BDtriangle$ case model and $\mathscr{M}_{\mathfrak{C}}$ its $\mu$-counterpart. Then the following holds.
\begin{enumerate}
\item $\mathfrak{C}\models\phi\Rightarrow^+\phi'$ iff $\mathscr{M}_\mathfrak{C}\models{\sim}\Be(\phi\wedge\neg\triangle\phi')\wedge{\sim\sim}\Be(\phi\wedge\triangle\phi')$.
\item $\mathfrak{C}\models\phi\Rightarrow^-\phi'$ iff $\mathscr{M}_\mathfrak{C}\models{\sim}\Be(\phi\wedge\triangle\neg\phi')\wedge{\sim\sim}\Be(\phi\wedge\neg\triangle\neg\phi')$.
\end{enumerate}
\end{theorem}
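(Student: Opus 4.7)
The proof will follow the same template as the preceding theorem on coherent arguments: unpack the outer G\"odel semantics into constraints on $\mu$-values, then use clauses~(1) and~(3) of Definition~\ref{def:mucounerparts} to translate these into (non)emptiness of positive interpretations, and finally into satisfaction of formulas by cases. I will sketch only item~1; item~2 is entirely symmetric, with $\triangle\phi'$ replaced by $\neg\triangle\neg\phi'$ and $\neg\triangle\phi'$ by $\triangle\neg\phi'$.

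First I would unpack the outer formula using the $\biG$-semantics: $e({\sim}\Be\xi_1\wedge{\sim\sim}\Be\xi_2)=1$ iff $\mu(|\xi_1|^+)=0$ and $\mu(|\xi_2|^+)>0$. By clause~(3) of Definition~\ref{def:mucounerparts} together with the monotonicity of $\mu$, any subset of $\{w_1,\ldots,w_n\}$ has $\mu$-measure $0$ iff it is empty. Hence the outer-layer condition collapses to the purely set-theoretic requirement $|\phi\wedge\neg\triangle\phi'|^+=\varnothing$ combined with $|\phi\wedge\triangle\phi'|^+\neq\varnothing$.

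Next I would translate back to cases via clause~(1) of Definition~\ref{def:mucounerparts}, used as a biconditional exactly as in the preceding proof. The second condition then directly amounts to positive coherence. For the first, the key observation (via Proposition~\ref{prop:BDtriangleclassicality}) is that $w_i\vDash^+\neg\triangle\phi'$ iff $w_i\not\vDash^+\triangle\phi'$. Consequently $|\phi\wedge\neg\triangle\phi'|^+=\varnothing$ iff for every $i$ with $w_i\vDash^+\phi$ we also have $w_i\vDash^+\triangle\phi'$; by clause~(1) this rephrases as: for every $\mathsf{c}_i\in\mathsf{C}$, if $\mathsf{c}_i\models_{\BDtriangle}\phi$ then $\mathsf{c}_i\models_{\BDtriangle}\triangle\phi'$. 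Using that $\BDtriangle$-entailment commutes with conjunction, this is precisely the universal clause from Definition~\ref{def:+-conclusiveness}, so combining it with positive coherence yields positive conclusiveness, and the converse direction is obtained by running the same equivalences in reverse.

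The main delicacy will be bridging satisfaction at a single world $w_i$ on the one hand and entailment from the case formula $\mathsf{c}_i$ on the other: positive conclusiveness is a \emph{universal} statement over cases, and one needs the classicality of $\triangle\phi'$ (Proposition~\ref{prop:BDtriangleclassicality}) to turn $w_i\not\vDash^+\triangle\phi'$ into $w_i\vDash^+\neg\triangle\phi'$, making the universal clause visible at the level of positive interpretations. Once this step is clearly justified the remaining work is routine bookkeeping, and item~2 goes through by the same argument applied to $\triangle\neg\phi'$ in place of $\triangle\phi'$, since Proposition~\ref{prop:BDtriangleclassicality} applies equally there.
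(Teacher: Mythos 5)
Your proposal is correct and follows essentially the same route as the paper's own proof: the conjunct ${\sim\sim}\Be(\phi\wedge\triangle\phi')$ is handled by the coherence representation (Theorem~\ref{theorem:coherentrepresenation}), while the universal clause of conclusiveness is converted into the emptiness of $|\phi\wedge\neg\triangle\phi'|^+$ and hence into $e(\Be(\phi\wedge\neg\triangle\phi'))=0$, exactly as the paper does via $|\phi|^+\cap(W\setminus|\triangle\phi'|^+)=\varnothing$. Your extra explicit steps (measure zero iff empty from clause~(3) plus monotonicity, the classicality of $\triangle\phi'$, and reading clause~(1) of Definition~\ref{def:mucounerparts} as a biconditional) only spell out what the paper leaves implicit, including in its unproved converse direction.
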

\begin{proof}
Again, for the sake of brevity, we consider only 1. We let $\mathfrak{C}\models\phi\Rightarrow^+\phi'$. Then, $\langle\phi,\phi'\rangle$ is positively coherent on $\mathfrak{C}$ and thus (by Theorem~\ref{theorem:coherentrepresenation}), $\mathscr{M}_{\mathfrak{C}}\models{\sim\sim}\Be(\phi\wedge\triangle\phi')$. Furthermore, since $\psi\models_{\BDtriangle}\phi\wedge\triangle\phi'$ for every $\psi\in\mathfrak{C}$ s.t.\ $\psi\models_{\BDtriangle}\phi$, we have that $|\phi|^+\cap(\mathsf{C}\setminus|\triangle\phi'|^+)=\varnothing$, whence $\mathscr{M}_\mathfrak{C}\models{\sim}\Be(\phi\wedge\neg\triangle\phi')$, as required. As the converse direction can be proved in the same manner, the result follows.
\end{proof}
\begin{theorem}\label{theorem:presumptivelyvalidrepresentation}
Let $\mathfrak{C}=\langle\mathsf{C},\preceq\rangle$ be a $\BDtriangle$ case model and $\mathscr{M}_{\mathfrak{C}}$ its $\mu$-counterpart. Then the following holds.
\begin{enumerate}
\item $\mathfrak{C}\models\phi\rightsquigarrow^+\phi'$ and $\chi$ is $\langle\phi,\phi'\rangle$'s witnessing case iff
\begin{center}
$\mathscr{M}_\mathfrak{C}\models{\sim\sim}\Be(\phi\wedge\triangle\phi')\wedge\triangle\Be(\chi\Rrightarrow(\phi\wedge\triangle\phi'))\wedge\bigwedge_{\chi'\in\mathsf{C}}\left(\triangle\Be(\chi\Rrightarrow\phi)\rightarrow\triangle(\Be\chi'\rightarrow\Be\chi)\right)$
\end{center}
\item $\mathfrak{C}\models\phi\rightsquigarrow^-\phi'$ and $\chi$ is $\langle\phi,\phi'\rangle$'s witnessing case iff
\begin{center}
$\mathscr{M}_\mathfrak{C}\models{\sim\sim}\Be(\phi\wedge\neg\triangle\neg\phi')\wedge\triangle\Be(\chi'\Rrightarrow(\phi\wedge\neg\triangle\neg 
\phi'))\wedge\bigwedge_{\chi'\in\mathsf{C}}\left(\triangle\Be(\chi'\Rrightarrow\phi)\rightarrow\triangle(\Be\chi'\rightarrow\Be\chi)\right)$
\end{center}
\end{enumerate}
\end{theorem}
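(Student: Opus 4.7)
The plan is to treat parts~1 and~2 in parallel, since they differ only by substituting $\triangle\phi'$ for $\neg\triangle\neg\phi'$ throughout; I would write out part~1 in full and indicate that part~2 goes through identically. Each direction breaks into verifying the three conjuncts of the target formula, which I would match one-to-one with the three defining clauses of positive presumptive validity: (i)~positive coherence of $\langle\phi,\phi'\rangle$, (ii)~the witness $\chi$ supporting $\phi\wedge\triangle\phi'$, and (iii)~$\chi$ being $\preceq$-maximal among $\chi'\in\mathsf{C}$ with $\chi'\models_{\BDtriangle}\phi$.

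For the forward direction, I would start from the unfolding of $\mathfrak{C}\models\phi\rightsquigarrow^+\phi'$ with witnessing case $\chi$. Conjunct~(i), ${\sim\sim}\Be(\phi\wedge\triangle\phi')$, I would extract directly from Theorem~\ref{theorem:coherentrepresenation}.1 since positive presumptive validity implies positive coherence. For conjunct~(ii), I would use the internalisation property to convert $\chi\models_{\BDtriangle}\phi\wedge\triangle\phi'$ into validity of $\chi\Rrightarrow(\phi\wedge\triangle\phi')$, concluding $|\chi\Rrightarrow(\phi\wedge\triangle\phi')|^+=W$ in $\mathscr{M}_\mathfrak{C}$ and hence $\triangle\Be(\ldots)=1$. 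For conjunct~(iii), I would fix $\chi'\in\mathsf{C}$ and split on whether $\chi'\models_{\BDtriangle}\phi$: in the negative case I would invoke Proposition~\ref{prop:BDtriangleclassicality} to argue that $\chi'\Rrightarrow\phi$ takes pointwise classical values and that the canonical world $w_{\chi'}$ falsifies it, making $\triangle\Be(\chi'\Rrightarrow\phi)<1$ and the conditional trivially~$1$; in the positive case I would appeal to presumptive maximality to get $\chi'\preceq\chi$, then use clause~2 of Definition~\ref{def:mucounerparts} together with pairwise incompatibility of cases (which pins down $|\chi''|^+=\{w_{\chi''}\}$ for every $\chi''\in\mathsf{C}$) to obtain $\mu(|\chi'|^+)\leq\mu(|\chi|^+)$, i.e., $\triangle(\Be\chi'\rightarrow\Be\chi)=1$.

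For the backward direction, I would read off conjunct~(i) as $\mu(|\phi\wedge\triangle\phi'|^+)>0$ and use positivity of singletons (clause~3) to land some $w_i$ in this set, giving positive coherence. From conjunct~(ii), I would combine Proposition~\ref{prop:BDtriangleclassicality} with the canonical correspondence $w_\chi\vDash^+\chi$ to conclude $\chi\models_{\BDtriangle}\phi\wedge\triangle\phi'$. Finally, I would instantiate conjunct~(iii) at each $\chi'\models_{\BDtriangle}\phi$ to get $\mu(|\chi'|^+)\leq\mu(|\chi|^+)$, convert this via clause~2 to $\chi'\preceq\chi$, and conclude that $\chi$ witnesses positive presumptive validity. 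The main obstacle will be the careful bookkeeping around positive extensions: using pairwise incompatibility to identify $|\chi''|^+$ with $\{w_{\chi''}\}$, exploiting Proposition~\ref{prop:BDtriangleclassicality} so that $\Rrightarrow$-formulas take crisp classical pointwise values, and reading clause~1 of Definition~\ref{def:mucounerparts} as the intended biconditional between $\mathsf{c}_i$ and $w_i$, so that $\mu$-theoretic assertions can be converted faithfully into $\BDtriangle$-entailments and back.
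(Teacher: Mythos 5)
Your proposal matches the paper's proof in all essentials: the same conjunct-by-conjunct correspondence with the three clauses of positive presumptive validity, the first conjunct extracted from Theorem~\ref{theorem:coherentrepresenation}, the second via the internalisation of entailment by $\Rrightarrow$, and the third via $\preceq$-maximality combined with clause~2 of Definition~\ref{def:mucounerparts} and the pairwise incompatibility of cases. The only cosmetic differences are that the paper handles the converse by contraposition where you argue directly, and that you make explicit the subcase $\chi'\not\models_{\BDtriangle}\phi$ and the canonicity bookkeeping for the $\mu$-counterpart, which the paper leaves implicit.
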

\begin{proof}
We prove 1.\ as 2.\ can be proven in the same manner. Assume that $\langle\phi,\phi'\rangle$ is positively presumptively valid over $\mathfrak{C}$ and that $\chi$ is its witnessing case. Then, $\langle\phi,\phi'\rangle$ is positively coherent (whence, $\mathscr{M}\models{\sim\sim}\Be(\phi\wedge\neg\triangle\neg\phi')$) and $\chi\models_{\BDtriangle}\phi\wedge\triangle\phi'$. Thus, $|\chi|^+\subseteq|\phi\wedge\triangle\phi'|^+$ and $|\chi|^-\supseteq|\phi\wedge\triangle\phi'|^-$ \emph{for every model $\mathscr{M}$}, whence $\mathscr{M}\models\triangle\Be(\chi\Rrightarrow(\phi\wedge\neg\triangle\neg 
\phi'))$. Finally, we also have that $\chi'\preceq\chi$ for every $\chi'\in\mathsf{C}$ s.t.\ $\chi\models_{\BDtriangle}\phi$. But this means that for every such $\chi'$, $\mu(|\chi'|^+)\leq\mu(|\chi|^+)$\footnote{Recall that since $\mathsf{c}_i\wedge\mathsf{c}_j\models_{\BDtriangle}\bot$ for every $\mathsf{c}_i,\mathsf{c}_j\in\mathsf{C}$, we have that $\mu(|\mathsf{c}_k|^+)=\mu(\{w_k\})$ for all $\mathsf{c}_k\in\mathsf{C}$.} and thus, $\mathscr{M}\models\triangle(\Be\chi'\rightarrow\Be\chi)$. Hence, $\mathscr{M}\models\bigwedge_{\chi'\in\mathsf{C}}\left(\triangle\Be(\chi'\Rrightarrow\phi)\rightarrow\triangle(\Be\chi'\rightarrow\Be\chi)\right)$, as required.

For the converse, let $\langle\phi,\phi'\rangle$ be \emph{not} positively presumptively valid argument with $\chi$ as the witnessing case. Then at least one of the following holds: (1) $\langle\phi,\phi'\rangle$ is not positively coherent; (2) $\chi\not\models_{\BDtriangle}\phi\wedge\triangle\phi'$; (3) there is some $\chi'\in\mathsf{C}$ s.t.\ $\chi'\models_{\BDtriangle}\phi$ but $\chi'\succ\chi$. Now, for (1), $\mathscr{M}\not\models{\sim\sim}\Be(\phi\wedge\neg\triangle\neg\phi')$; for (2), $\mathscr{M}\not\models\triangle\Be(\chi\Rrightarrow(\phi\wedge\triangle\phi'))$; and finally, for (3), $\mathscr{M}\not\models\bigwedge_{\chi'\in\mathsf{C}}\left(\triangle\Be(\chi'\Rrightarrow\phi)\rightarrow\triangle(\Be\chi'\rightarrow\Be\chi)\right)$.
\end{proof}
\noindent Recall that conclusive and coherent arguments over \emph{classical} case models were represented by means of \emph{conditional probabilities} in~\cite{Verheij2016,Verheij2017}. Here, we did not need conditionalisations on capacities as we used a purely logical representation and could employ $\rightarrow$ in order to ‘simulate’ conditionalised measures.
\section{Conclusion\label{sec:conclusion}}
In this paper, we provided paraconsistent counterparts to the case models discussed in~\cite{Verheij2016,Verheij2017} that use $\BDtriangle$ as their underlying logic. We showed how to recover classical presumptive reasoning from $\BDtriangle$ case models (Theorem~\ref{theorem:BDtrianglecounterparts}). Moreover, we constructed a two-layered logic $\QGBDtriangle$ over $\BDtriangle$ and $\biG$ and used it to establish a representation of arguments with $\QGBDtriangle$ formulas (Theorems~\ref{theorem:coherentrepresenation}--\ref{theorem:presumptivelyvalidrepresentation}).

The natural next steps would be as follows. First, it is instructive to provide a complete axiomatisation of $\QGBDtriangle$. Second, while in this paper we used a linear preference relation (as it is traditionally done, cf., e.g.,~\cite{Roberts1985}), one could argue that if an agent cannot choose between two cases $\mathsf{c}$ and $\mathsf{c}'$, it does not mean that they prefer them to the same degree. It is, hence, reasonable to explore case models whose preference relation is a \emph{partial} preorder. Finally, we managed to represent preference relations on case models as capacities on their $\BDtriangle$ counterparts. An expected question to ask is which properties we have to require from the case model so that its preference relation be represented as a stronger measure: e.g., a belief function, a plausibility function, or a probability measure.
\bibliographystyle{eptcs}
\bibliography{generic}
\newpage
\end{document}